\newcommand{\ubar}[1]{\underaccent{\bar}{#1}}
\newtheorem{prop}{Proposition}
\newtheorem{definition}{Definition}
\DeclareMathOperator*{\argmin}{arg\,min}
\title{\LARGE \bf
	Lyapunov Safety Observers for a Restricted Class of Nonlinear Systems
} 
\title{\LARGE \bf
Lyapunov Safety Observers: Enforcing a Safe State-Space
} 
\title{\LARGE \bf
Safety Enforcement in Constrained Nonlinear Systems via Min-Quadratic Lyapunov Function and Hybrid Control
} 
\title{\LARGE \bf
Safety Control Synthesis with Input Limits: a Hybrid Approach
}
\author{ Gray C. Thomas$^1$, Binghan He, and Luis Sentis 
\thanks{\fontsize{8}{8}\selectfont This work was supported by NASA Space Technology Research Fellowship NNX15AQ33H (G.C.T), and a Longhorn Innovation Fund For Technology grant (B.H.). $^{1}${\tt\small gray.c.thomas@utexas.edu}. Authors are with The Departments of Mechanical Engineering (G.C.T., B.H.) or Aerospace Engineering (L.S.), University of Texas at Austin,
        Austin, TX 78712-0292, USA.}%
}
\newcommand\copyrighttext{%
  \scriptsize 
  Accepted for publication in American Control Conference (ACC)
  \textcopyright 2018 IEEE. Personal use of this material is permitted. Permission from IEEE must be obtained for all other uses, in any current or future media, including reprinting/republishing this material for advertising or promotional purposes, creating new collective works, for resale or redistribution to servers or lists, or reuse of any copyrighted component of this work in other works.
  DOI: \href{https://ieeexplore.ieee.org/abstract/document/8431457}{10.23919/ACC.2018.8431457}
  }
\newcommand\copyrightnotice{%
\begin{tikzpicture}[remember picture,overlay]
\node[anchor=south,yshift=10pt] at (current page.south)
{\fbox{\parbox{\dimexpr\textwidth-\fboxsep-\fboxrule\relax}{\copyrighttext}}};
\end{tikzpicture}%
}
\begin{document}

\maketitle
\thispagestyle{empty}
\pagestyle{empty}
\copyrightnotice

\begin{abstract}
We introduce a hybrid (discrete--continuous) safety controller which enforces strict state and input constraints on a system---but only acts when necessary, preserving transparent operation of the original system within some safe region of the state space. We define this space using a Min-Quadratic Barrier function, which we construct along the equilibrium manifold using the Lyapunov functions which result from linear matrix inequality controller synthesis for locally valid uncertain linearizations. We also introduce the concept of a barrier pair, which makes it easy to extend the approach to include trajectory-based augmentations to the safe region, in the style of LQR-Trees. We demonstrate our controller and barrier pair synthesis method in simulation-based examples.
\end{abstract}


\section{Introduction}



Controllers which ensure safe operation of dynamic systems despite un-trusted inputs are widely appreciated for their straightforward safety verification. They find application where safety is critical, and also where input foresight is unavailable. These systems must first guarantee future satisfaction of both state and input constraints on the dynamic system---with input limits being a critical complicating factor. As was famously argued in the first Bode lecture, input limits on the fuel rod controller explain the signal behavior minutes before the Chernobyl reactor's nuclear melt-down \cite{Stein2003CS}. A natural secondary goal is to maximize the region of the state space that the controller certifies as safe to use.

Unlike the reference governor \cite{Bemporad1998TAC}, which enforces state constraints by way of constrained model predictive control \cite{MayneEA2000Automatica}, safety controllers for nonlinear systems use the sub-level sets of a scalar function---a Lyapunov function, or one of several relaxations---to encode information about the safe region boundary.

Several barrier Lyapunov nonlinear approaches start by building a Lyapunov function which is infinite within the unsafe region. Backstepping \cite{TeeGeTay2009Automatica} and adaptive control techniques \cite{LiuTong2016Automatica} can then guarantee safety, if not input limits. Less restrictive barrier functions and barrier certificates must decrease only at the boundary \cite{PrajnaJadbabaie2004HSCC}---the sub-level set of zero. Barrier functions have been extended to PDEs \cite{AhmadiValmorbidaPapachristodoulou2017SCL}, and dynamical segregation in arbitrary manifolds \cite{WisniewskiSloth2016TAC}. Barrier Lyapunov functions can be constructed from a Lyapunov function and a barrier function \cite{AmesEA2017TAC}, but finding such functions is the classical art of nonlinear control practitioners.

\begin{figure}
	\centering
	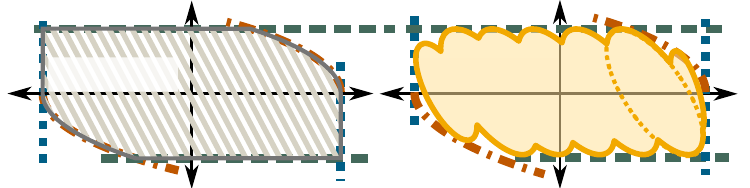
	\caption{The largest possible (\cite{BobrowDubowskyGibson1985IJRR}-style) safe region (a) for system $\ddot x=u$ with position limit (b lines), velocity limit (c lines) and acceleration or input limit (d lines). We approximate region (a) with (e), the zero sub-level set of our min-Quadratic barrier function, which is a union of robustly invariant ellipsoids (f) centered about various points on the equilibrium manifold.}\label{fig:spaces}
\end{figure}  

A control Lyapunov function \cite{Sontag1989SCL} merely needs to be capable of decreasing everywhere, and control barrier functions \cite{WielandAllgower2007IFAC} relax limitations on the choice of scalar function to the utmost---but still leaves the construction of such functions, and the bounding of the input, as an art. The secondary, or non-safety control objectives can be combined into a composite function \cite{RomdlonyBayu2016Automatica}, or added to the optimization which determines the input \cite{AmesEA2017TAC},  \cite{NguyenSreenath2016ACC}, \cite{NguyenSreenath2016IJRRreview}, but doing so alters the original controller's behavior everywhere. In particular, \cite{NguyenSreenath2016ACC} and \cite{NguyenSreenath2016IJRRreview} emphasize that high relative order constraints require careful adjustments to the boundary function to avoid large inputs.

Automatic synthesis of barrier certificates through sum of squares (SoS) optimization \cite{Prajna2006Automatica} has emerged as the standard solution to this design-burden issue, and has been adopted in safety verification \cite{BarryMajumdarTedrake2012ICRA}, and region of attraction estimation \cite{GlassmanEA2012ICRA} for already designed controllers in the presence of constraints. Most ambitiously, the LQR-Tree algorithm \cite{Tedrake2009RSS}, \cite{TedrakeEA2010IJRR} attempts to map out the entire backwards-reachable state space using the union of funnels---the sub-level sets of trajectory tracking LQR Lyapunov functions. The LQR-Tree strategy could potentially be adapted to safety control, but remains structurally plagued by the non-conservative polynomial approximation of the dynamics, inability to exploit choices available during controller design, and---despite efforts to improve the speed by sacrificing guarantees \cite{ReistTedrake2010ICRA}---dimensional explosion of the SoS sub-problem, trajectory optimization sub-problem, and the tree structure in the full state space 
\cite{TedrakeEA2010IJRR}.

Construction of a safe region can also be pursued through polyhedral sets \cite{Blanchini1999Automatica}. This approach offers a necessary and sufficient condition for safety of polytopic linear differential inclusion models, improving over the merely sufficient conditions which result from quadratic Lyapunov function synthesis. However polyhedral sets scale badly in high dimensions, and are difficult to incorporate into synthesis problems.

Linear matrix inequality (LMI) controller synthesis problems (c.f. \cite{BoydGhaouiFeronBalakrishnan1994SIAM}) offer a conservative way to certify invariant ellipsoids as safe---and design controllers to maximize their area. Such invariant ellipsoids have been applied to input and state constrained linear systems \cite{HuLin2003TAC}, and it has been shown that under these conditions the convex hull of the regions is also invariant---for ellipsoids sharing a center. The less-explored, non-convex min-quadratic function (mentioned in \cite{HuLin2006CDC} for same-center ellipsoids)---which bears similarity to the minimization over quadratics that runs once to select the starting funnel in \cite{ReistTedrake2010ICRA}---is more easily adapted to our purpose.

In this paper we propose to use linear differential inclusions to approximate a nonlinear system at a grid of equilibriums---each a conservative approximation of the nonlinear model within some region of validity. For each equilibrium we use an LMI to find the linear feedback and quadratic Lyapunov function such that the function's unity-sublevel set satisfies all state and input constraints while certifying the largest volume region. Our min-quadratic barrier function is the minimum over all of these quadratic Lyapunov functions---minus one so that the 0-sub-level set is an approximation of the safe region. This produces an approximate safe region which is the union of ellipsoids, region e in Fig.~\ref{fig:spaces}.

The ideal safety controller, in our view, would adhere exactly to Fig.~\ref{fig:spaces}'s region (a), applying either no input, or limit-saturated input as soon as the state hits the boundary of the safe region. This strategy relies on being able to compute this region, but this is only feasible in SISO systems of order less than 2, in which case the safety boundary can be found by a series of integrations \cite{BobrowDubowskyGibson1985IJRR,ThomasSentis2016IROS,PhamEA2017IJRR}. In that spirit we apply a control guaranteed to reduce the min-quadratic barrier function only near zero, ensuring that trajectories which stay within a level set are unaffected by the safety control.

We demonstrate our technique by constructing the region in which an inverted pendulum can balance, subject to speed and input limits---demonstrating the natural emergence of an exponential deceleration limit near the point where the force of gravity overwhelms the pendulum. We also simulate the high relative order behavior of a series elastic actuator under position and motor effort constraints.

\section{Barrier Pairs}
\subsection{Problem Statement}
We consider the problem of designing a safety controller $\mathbf K$ and safe region $\mathcal X_0$ for the system $\Sigma_0$:
\begin{equation}
\dot x = f(x) + g(x) u
\end{equation}
which forms a safe closed loop system $\Sigma_s$, guaranteed to satisfy constraints $x\in \mathcal X$ and $u\in\mathcal U$ indefinitely, for all initial states in $\mathcal X_0\subseteq \mathcal X$.

\subsection{Barrier Pairs}
Since we have input constraints, we define a concept to stand in for the standard notion of barrier functions.
\begin{definition}
A \emph{Barrier Pair} is a pair of functions $(B,\ k)$ with two properties: invariance and constraint satisfaction;
\begin{gather}
-1<B(x)\leq 0, u=k(x) \implies \mathring B(x) < 0 , \label{eq:bp1}\\
B(x)\leq 0 \implies x \in \mathcal X,\ k(x) \in \mathcal U.\label{eq:bp2}
\end{gather}
Using notation $\mathring B$ to mean the (minimally-restrictive) upper right-hand Dini derivative with respect to time.
\end{definition}

Note that apart from the generalization of the derivative, this definition is more stringent than a barrier function, since not only $B$ but also $B-\epsilon\ \ \forall\ 0\geq\epsilon>-1$ must be a barrier function for the system $\dot x = f(x)+g(x)k(x)$. Additionally, while barrier functions can be expected to hold for a saturated-input system, they do not themselves uphold limits on the inputs. These requirements on $B$ also impose a constraint which is not present in (nor particularly convenient to include in) the definition of control barrier functions, that 
\begin{equation}
\underset{u\in\mathcal U}{\min}\mathring B(x)<0 \quad \forall\ x\in\{x: -1<B(x)\leq0\}.
\end{equation}

Barrier pairs permit a (discontinuous) version of \cite{WielandAllgower2007IFAC}'s Theorem 7 with a claim on the input bound:
\begin{prop}
Given system $\Sigma_0$, a barrier pair $(B,k)$, and a partially known policy, for any $0\geq\epsilon >-1$
\begin{equation}
k_0(x) = \left\lbrace \begin{aligned} k(x) && B(x)=\epsilon\\u\in\mathcal U &&\text{otherwise}\end{aligned}\right.
\end{equation}
then if $\mathbf K$ implements $u=k_0(x)$, $\Sigma_s$ is safe and $\mathcal X_0$ can be chosen
\begin{equation}
\mathcal X_0=\{x \ : \ B(x)\leq \epsilon\}
\end{equation}
\end{prop}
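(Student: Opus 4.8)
The plan is to show that the closed loop under $u = k_0(x)$ keeps every trajectory starting in $\mathcal X_0$ inside $\mathcal X_0$ forever, and that along every such trajectory $k_0(x) \in \mathcal U$. Both facts will follow from the two defining properties of a barrier pair applied to the set $\{x : B(x) \le \epsilon\}$.

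First I would argue invariance of $\mathcal X_0 = \{x : B(x)\le\epsilon\}$. Consider a trajectory with $x(0)\in\mathcal X_0$, so $B(x(0))\le\epsilon\le 0$. The only way to leave $\mathcal X_0$ is for $B(x(t))$ to increase through the value $\epsilon$; by continuity of $t\mapsto B(x(t))$ this requires passing through a time at which $B(x)=\epsilon$. At any such instant the controller applies exactly $u=k(x)$, and since $-1<\epsilon=B(x)\le 0$, property \eqref{eq:bp1} gives $\mathring B(x)<0$, i.e. $B$ is strictly decreasing there. A standard comparison/Dini-derivative argument then shows $B(x(t))$ cannot cross $\epsilon$ from below: whenever it reaches $\epsilon$ it is immediately pushed back down, so $B(x(t))\le\epsilon$ for all $t\ge 0$ and $\mathcal X_0$ is forward invariant. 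Because $\mathcal X_0\subseteq\{x:B(x)\le 0\}\subseteq\mathcal X$ by \eqref{eq:bp2}, the state constraint $x\in\mathcal X$ holds for all time.

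Next I would verify the input constraint along the trajectory. For $t$ with $B(x(t))=\epsilon$ the input is $k(x(t))$, and since $B(x(t))=\epsilon\le 0$, property \eqref{eq:bp2} yields $k(x(t))\in\mathcal U$. For all other $t$ the controller is permitted to output any $u\in\mathcal U$ by the definition of $k_0$, so $k_0(x(t))\in\mathcal U$ trivially. Hence $u\in\mathcal U$ for all $t\ge 0$, and combined with the previous paragraph, $\Sigma_s$ is safe with $\mathcal X_0$ as claimed.

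The main obstacle is making the ``cannot cross $\epsilon$ from below'' step rigorous given only the one-sided Dini derivative $\mathring B$ and a possibly discontinuous control law: one must rule out pathological behavior such as chattering on the level set or an immediate re-entry, which is exactly why the definition demands $\mathring B(x)<0$ on the whole shell $\{-1<B\le 0\}$ rather than merely on $\{B=\epsilon\}$ — giving a uniform strict-decrease margin in a neighborhood of the level set. I would invoke the standard Dini-derivative comparison lemma (the same machinery behind \cite{WielandAllgower2007IFAC}'s Theorem 7): if $\mathring B(x(t))<0$ whenever $B(x(t))=\epsilon$, then $B(x(t))\le\max\{B(x(0)),\epsilon\}=\epsilon$ for all $t$, so no extra regularity of $k_0$ is needed beyond existence of the trajectory.
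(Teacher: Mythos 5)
Your proposal is correct and follows essentially the same route as the paper's (one-line) proof: invariance of $\{x : B(x)\leq\epsilon\}$ because $B(x)=\epsilon$ forces $u=k(x)$ and hence $\mathring B(x)<0$ by \eqref{eq:bp1}, and constraint satisfaction because $B(x)\leq\epsilon\leq 0$ triggers \eqref{eq:bp2}. The only difference is that you make explicit the Dini-derivative comparison argument that the paper leaves implicit in its ``$\therefore \mathcal X_0$ is invariant'' step, which is a reasonable elaboration rather than a different approach.
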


\begin{proof}
$B(x)=\epsilon \implies u=k(x) \implies \mathring B(x) < 0,\ \therefore \mathcal X_0$ is invariant, $x\in \mathcal X_0\implies B(x)\leq \epsilon \implies x \in \mathcal X,\ k_0(x) \in \mathcal U$.
\end{proof}

The lower bound on $B(x)$ in the invariance condition \eqref{eq:bp1} allows for Lyapunov functions, which do not decrease at the origin---or which, in some uncertain systems, do not decrease inside the boundary of a residual set---to be used in a barrier pair (after some minor shifting and scaling).

\subsection{Barrier Pair LMI Subproblem}
Consider a linear differential inclusion (LDI) model \cite{BoydGhaouiFeronBalakrishnan1994SIAM} which approximates $\Sigma_0$ near an equilibrium\footnote{Generating robust approximations does not have an easy catch-all solution, but is often possible with a little insight into the problem structure. While simply adding model conservatism is another option, we advise a rigorous empirical validation if the safety guarantees are important.}---a robust linearization. We will focus on polytopic LDIs\footnote{This is for simplicity; norm-bounded LDIs have the best scaling for high dimensional problems, can be directly identified \cite{ThomasSentis2018arXiv}, and are also robustly stable if and only if there exists a quadratic Lyapunov function. This is merely a sufficient condition for polytopic LDIs.} 
{\small
\begin{align}
\dot x \in \mathbf{Co}&\{A_l (x-x_e)+B_l(u-u_e),\   l=1,\dotsc,L\},\label{eq:RoV}\\
\forall \quad& x\in\{x\ : \  |a_i^T (x-x_e)|\leq \alpha_i, \ i=1,\dotsc,n_a \}\subseteq \mathcal X,\nonumber\\
&u\in \{u :\ |b_i^T (u-u_e)|\leq \beta_i, \ i=1,\dotsc,n_b\} \subseteq \mathcal U,\nonumber
\end{align}}
operator $\mathbf{Co}$ denoting convex hull,
which approximate $\Sigma_0$ near an equilibrium $(x_e,\ u_e)\ :\ f(x_e)+g(x_e)u_e=0$. 

This allows a fairly standard set of LMI constraints to determine a positive definite matrix $Q$, and full state feedback matrix $K$ such that---defining scalar $B(x)\triangleq(x-x_e)^T Q^{-1} (x-x_e)-1$ and ellipsoidal region $\mathcal E \triangleq \{x\ |\ B(x)\leq 0\}$---when $u = k(x) \triangleq u_e+K(x-x_e)$ and $x\in \mathcal E$, then $B(x)+1$ is a quadratic Lyapunov function, $x$ satisfies all state constraints, and $u$ satisfies all input constraints. 

Following the standard trick \cite{BoydGhaouiFeronBalakrishnan1994SIAM} to make this type of problem convex, we define $Y\triangleq KQ$ as an optimization variable---and extract $K$ from $Y$ and $Q$ after the problem is solved.

State constraints from \eqref{eq:RoV} are enforced such that $x\in\mathcal E\implies |a_i^T(x-x_e)|<\alpha_i$---a linear constraint on $Q$
\begin{equation}
a_i^T Q a_i \leq \alpha_i^2.\label{eq:state_lim}
\end{equation}
And input constraints in the form
\begin{equation}
\begin{pmatrix}
Q & Y^T b_i\\
b_i^T Y & \beta_i^2
\end{pmatrix}\succeq 0.\label{eq:input_lim}
\end{equation}
are added to ensure $x\in\mathcal E \implies |b^T(k(x)-u_e)|\leq \beta_i$.
To guarantee $B(x)<0,\ u=k(x) \implies \dot B(x)<0$ is equivalent to the standard Lyapunov condition,
\begin{equation} \label{constraint1-2-1}
A_lQ + QA_l^{T} + B_lY + Y^{T}B_l^{T} \prec 0 \ \forall \ l = 1, \dotsc, L.
\end{equation}
And finally, to maximize the volume of the ellipsoid $\mathcal E$, we maximize the log of the determinant of $Q$---a concave cost function \cite{BoydGhaouiFeronBalakrishnan1994SIAM}. With a numerical tolerance $\varepsilon >0$, and a minimum exponential decay rate $\lambda>0$, our sub-problem is to
{
\begin{equation} \label{optimization 2}
\begin{aligned}
& \underset{Q, \, Y}{\text{maximize}}
& & \mathrm{log}(\mathrm{det}(Q)) \\
& \text{subject to} & & Q\succeq \varepsilon I\\
&&& \eqref{eq:state_lim}\ \forall\  i = 1, \dotsc, n_a  \\
&&& \eqref{eq:input_lim}\ \forall\  i = 1, \dotsc, n_b  \\
&&& A_l Q + Q A_l^{T} + B_l Y + Y^{T} B_l^{T} + \varepsilon I+\lambda Q \preceq 0 \ \\
&&& \qquad \qquad \qquad\qquad\forall \ l = 1, \dotsc, L
\end{aligned}\nonumber
\end{equation}}

Which naturally provides a barrier pair $(B, k)$ if the problem is feasible.

\subsection{Combining Barrier Pairs}
\begin{prop}
For any list of barrier pairs $(B_1,k_1),\ (B_2, k_2),\  \dotsc,\ (B_N,k_N)$, the pair comprising the min-barrier function 
\begin{equation}
\mathbf B(x) \triangleq \underset{n=1,\dotsc,N}{\min} B_n(x) \label{eq:B}
\end{equation}
and control input\footnote{This input is occasionally ambiguous. The choice does not matter.}
\begin{equation}
\mathbf k(x) \triangleq k_n(x) \ |\ n\in \underset{n=1,\dotsc,N}{\argmin} B_n(x),\label{eq:k}
\end{equation}
$(\mathbf B, \mathbf k)$, is also a barrier pair.
\end{prop}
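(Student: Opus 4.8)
The plan is to check the two defining properties of a barrier pair for $(\mathbf B,\mathbf k)$ directly from \eqref{eq:B}--\eqref{eq:k}, in each case pushing the question down to whichever component pair is active at the point under consideration. Throughout, write $\mathcal A(x)\triangleq\argmin_{n} B_n(x)$ for the (nonempty, since $N<\infty$) active set, and let $n^\star\in\mathcal A(x)$ be an index realizing the tie-break used in \eqref{eq:k}, so that $\mathbf k(x)=k_{n^\star}(x)$ and $B_{n^\star}(x)=\mathbf B(x)$.

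First I would dispatch constraint satisfaction \eqref{eq:bp2}. If $\mathbf B(x)\le 0$, then every $m\in\mathcal A(x)$ has $B_m(x)=\mathbf B(x)\le 0$; applying \eqref{eq:bp2} for the pair $(B_m,k_m)$ gives $x\in\mathcal X$ (from any one such $m$) and $k_m(x)\in\mathcal U$. In particular this holds for the index $n^\star$ chosen in \eqref{eq:k}, so $\mathbf k(x)=k_{n^\star}(x)\in\mathcal U$, and the conclusion is independent of how the occasionally ambiguous argmin is resolved. This step is essentially immediate.

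Next, invariance \eqref{eq:bp1}. Assume $-1<\mathbf B(x)\le 0$ and $u=\mathbf k(x)$. Since $B_{n^\star}(x)=\mathbf B(x)$ we have $-1<B_{n^\star}(x)\le 0$, and $u=\mathbf k(x)=k_{n^\star}(x)$, so \eqref{eq:bp1} for $(B_{n^\star},k_{n^\star})$ yields $\mathring B_{n^\star}(x)<0$ along the closed-loop field $\dot x=f(x)+g(x)k_{n^\star}(x)=f(x)+g(x)\mathbf k(x)$ — the very field along which $\mathring{\mathbf B}(x)$ must be evaluated. It then remains to transfer the sign from $B_{n^\star}$ to $\mathbf B$. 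For this I would invoke the elementary fact that if $\phi\le\psi$ pointwise with $\phi(t_0)=\psi(t_0)$ then the upper right Dini derivatives satisfy $\mathring\phi(t_0)\le\mathring\psi(t_0)$, applied with $\phi = \mathbf B\circ x(\cdot)$ and $\psi = B_{n^\star}\circ x(\cdot)$: indeed $\mathbf B\le B_{n^\star}$ everywhere (a property of the min) and they agree at the current state, so for small $h>0$, $\mathbf B(x(t_0+h))-\mathbf B(x(t_0))\le B_{n^\star}(x(t_0+h))-B_{n^\star}(x(t_0))$, and dividing by $h$ and taking $\limsup_{h\to0^+}$ preserves the inequality. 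Hence $\mathring{\mathbf B}(x)\le\mathring B_{n^\star}(x)<0$, which is \eqref{eq:bp1}.

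The only point needing care — and the main (mild) obstacle — is this Dini-derivative comparison: one must be careful that $\mathring{\mathbf B}$ is taken along the \emph{same} trajectory generated by $\mathbf k$ that makes $n^\star$ active, so that the hypotheses $\mathbf B\le B_{n^\star}$ and $\mathbf B(x)=B_{n^\star}(x)$ of the comparison lemma really hold at the relevant instant. Because the minimum is over finitely many functions, only continuity of the $B_n$ (already implicit in the setup) is needed and the argument is purely local; the active set is allowed to change along the trajectory without affecting anything, since the inequality $\mathbf B\le B_{n^\star}$ holds globally. With both properties verified, $(\mathbf B,\mathbf k)$ is a barrier pair.
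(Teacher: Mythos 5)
Your proof is correct and follows essentially the same route as the paper's: pick an index $n^\star$ in the argmin realizing $\mathbf k(x)=k_{n^\star}(x)$, reduce both conditions to the pair $(B_{n^\star},k_{n^\star})$, and use $\mathring{\mathbf B}(x)\le\mathring B_{n^\star}(x)<0$. The only difference is that you explicitly justify that Dini-derivative comparison (via $\mathbf B\le B_{n^\star}$ with equality at the current state), which the paper asserts without elaboration.
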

\begin{proof}
Consider the set
\begin{gather}
\mathcal N = \underset{n=1,\dotsc,N}{\argmin} B_n(x),
\shortintertext{and in particular an $n\in\mathcal N:\mathbf k(x)=k_n(x)$. Assuming first that $-1<\mathbf B(x)\leq 0$ and $u=\mathbf k(x)$,}
\mathring{\mathbf B}(x) \leq \mathring B_n(x)<0,
\shortintertext{since $(B_n, k_n)$ is a barrier pair, $u=\mathbf k(x)=k_n(x)$, and $-1<\mathbf B(x)=B_n(x)\leq 0$. This demonstrates \eqref{eq:bp1}. As for \eqref{eq:bp2}, using the same choice of $n$,}
0\geq \mathbf B(x)=B_n(x)\implies x\in \mathcal X,\ \mathbf k(x)=k_n(x)\in\mathcal U
\end{gather}
\end{proof}
We note that this combination technique is very similar to the initial funnel lookup operation in the LQR-Trees algorithm. Moreover, if they provide certain guarantees, funnels imply the existence of a barrier pair.
\begin{prop}

A funnel comprising a trajectory-centered Lyapunov function $V(t,x)$ ($=V_t(x)$) and control $k(t,x)$ ($=k_t(x)$) for all $t\in[0,\infty]$ (note the extended reals), such that the funnel volume---the union of unity sub-level sets of $V_t\ \forall\ t\in[0,\infty]$---satisfies all state constraints ($V_t(x)<1\implies x\in \mathcal X$) and input constraints ($V_t(x)<1\implies k_t(x)\in\mathcal U$) guarantees the existence of a barrier pair $(\mathbf B, \mathbf k)$ formed according to the following continuous parameter versions of \eqref{eq:B} and \eqref{eq:k}:
\begin{align}
\mathbf B(x) &= \underset{t\in[0,\infty]}{\min}V_t(x)-1,\\
\mathbf k(x) &= k_t(x) \ |\ t\in\underset{t\in[0,\infty]}{\argmin} V_t(x). 
\end{align}
This pair is the combination of an infinite list of pairs $(V_t-1,k_t)$, parameterized by a time parameter, each of which upholds constraint satisfaction \eqref{eq:bp2}, but does not individually have invariance \eqref{eq:bp1}---unless they happen to be equilibrium-centered. 
\begin{figure}
\centering
\scalebox{.9}{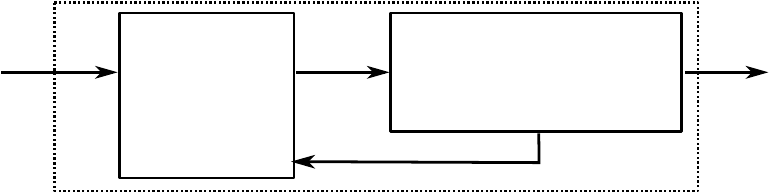}
\caption{Block diagram, re-purposed from \cite{WielandAllgower2007IFAC}, showing a safety controller $\mathbf K$ in feedback with the original system $\Sigma_0$ to produce a safe system $\Sigma_s$. The safety controller chooses either to be completely transparent ($u=\hat u$) or apply the known-to-be-safe input $u=\mathbf k(x)$.}\label{fig:safety_controller}
\end{figure}
\begin{figure}
\centering
\scalebox{.8}{
	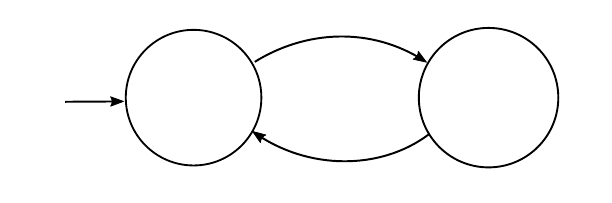}
    \caption{Hybrid Control System}
    \label{fig:fsm}
\end{figure}
\end{prop}
\begin{proof} For any x inside the funnel such that $\exists \  t\in[0,\infty] \ | \ 0<V_t(x)\leq 1$, the funnel Lyapunov function satisfies $\dot V<0$. Consider the instantaneous $t\in\underset{t\in[0,\infty]}{\argmin}V_t(x)$, $\mathbf B(x) = V_t(x)-1$, $\mathbf k(x) = k_t(x)$. For an infinitesimal time $\delta>0$,
\begin{gather}
\mathbf B(x(t+\delta)) =\underset{\tau\in[0,\infty]}{\min} V_{\tau}(x(t+\delta))-1\\
\leq V_{t+\delta}(x(t+\delta))-1= \mathbf B(x)+\delta \dot V(t)\\
\mathring{\mathbf B}(x)= \underset{\delta \rightarrow 0+}{\lim\sup} \frac{\mathbf B(x(t+\delta))-\mathbf B(x)}{\delta}\leq \dot V(t)<0
\end{gather}
Indicating that $(\mathbf B,\mathbf k$) satisfies \eqref{eq:bp1}.
\end{proof}
Note that, especially when the invariant funnel is large or the original trajectory is near to intersecting itself, applying $\mathbf k(x)$ is an entirely different behavior compared to applying $k(t,x)$---the barrier pair discards the information from the funnel's time parameterization.

\section{Hybrid Safety Controller}\label{multiV}

Equipped with barrier pair $(\mathbf B,\mathbf k)$, with its potentially non-smooth $\mathbf k$, we opt for an explicitly discontinuous safety controller (Fig.~\ref{fig:safety_controller}) with a simple state machine (Fig.~\ref{fig:fsm}) to produce hysteretic behavior---reminiscent of \cite{PrajnaJadbabaie2004HSCC}'s second example of a safe hybrid system.

Behavior is tuned by two near-zero thresholds $\bar \epsilon$ and $\ubar \epsilon$, $0\geq\bar \epsilon >\ubar \epsilon>-1$. As $\ubar \epsilon \rightarrow \bar \epsilon$, the safety controller enforces the inequality constraint $\mathbf B\leq \bar \epsilon $, and as $\ubar \epsilon \rightarrow -1$ it returns the system to the nearest equilibrium after each run-in with the safety limits. Detuning $\bar \epsilon$ from the ideal of zero can only reduce $\mathcal X_0$, but offers a hedge against real-world noisy signals in the computation of $\mathbf B(x)$. The gap between $\bar \epsilon$ and $\ubar \epsilon$ indirectly sets the rate of back and forth switching when the system is up against the limit.


In the examples, we use a min-quadratic barrier pair---simply the combination of those barrier pairs resulting from the LMI sub-problem. We therefore expect that applying the control $\mathbf k(x)$ guarantees exponential convergence to one of the equilibriums---though which one, and whether the system will transition between local control laws as it settles is not clear before hand. (This behavior is later visualized in Fig.~\ref{fig:manyequilibriums}.) The original equilibriums now represent the multiple minima of $\mathbf B$, all of them sharing the minimum value $-1$. 

Note that the model on which the LMI is based is trusted  even at high frequencies, and claims a decrease in $\mathbf B$ the instant the control law is suddenly enabled. Deviation from this ideal behavior must be compensated for when tuning the switching thresholds, or uncertain models which are trusted (to be sufficiently uncertain) at high frequencies must be used.

\section{Examples}

In this section, we provide two simulation examples demonstrating the operation of the hybrid safety controller. The first example is a second order unstable nonlinear system, an inverted pendulum\footnote{We imagine a pendulum for which falling is a catastrophic failure.} (Fig.~\ref{fig:inverted_pendulum}). The second example is a spring-mass system with 1 input and 4 states---which we use to explore the behavior near high relative-order constraints. In both examples, we use only equilibrium-centered barrier pairs---generated using our example LMI subproblem.

\subsection{Inverted Pendulum System}

\begin{figure}
\centering

\vspace{1.5em} 

\hbox{\hspace{-1.5em} \scalebox{1.0}{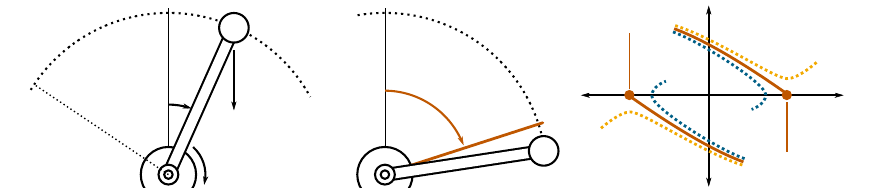}}
\caption{Inverted pendulum model with natural position limit $\theta_c$---a ``point of no return''---due to input limit $-\bar \tau \leq \tau \leq \bar \tau$, which causes $\theta_c$ to be a critical point in the flow field for $\tau = -\bar\tau$ and $-\theta_c$ to be one in the flow field for $\tau = \bar \tau$. In our approach such points are implicitly treated as on the boundary of the unsafe set.}
\label{fig:inverted_pendulum}
\end{figure}


\begin{figure*}
    \centering
    \scalebox{.85}{
    	\def\svgwidth{1.0\textwidth}
    	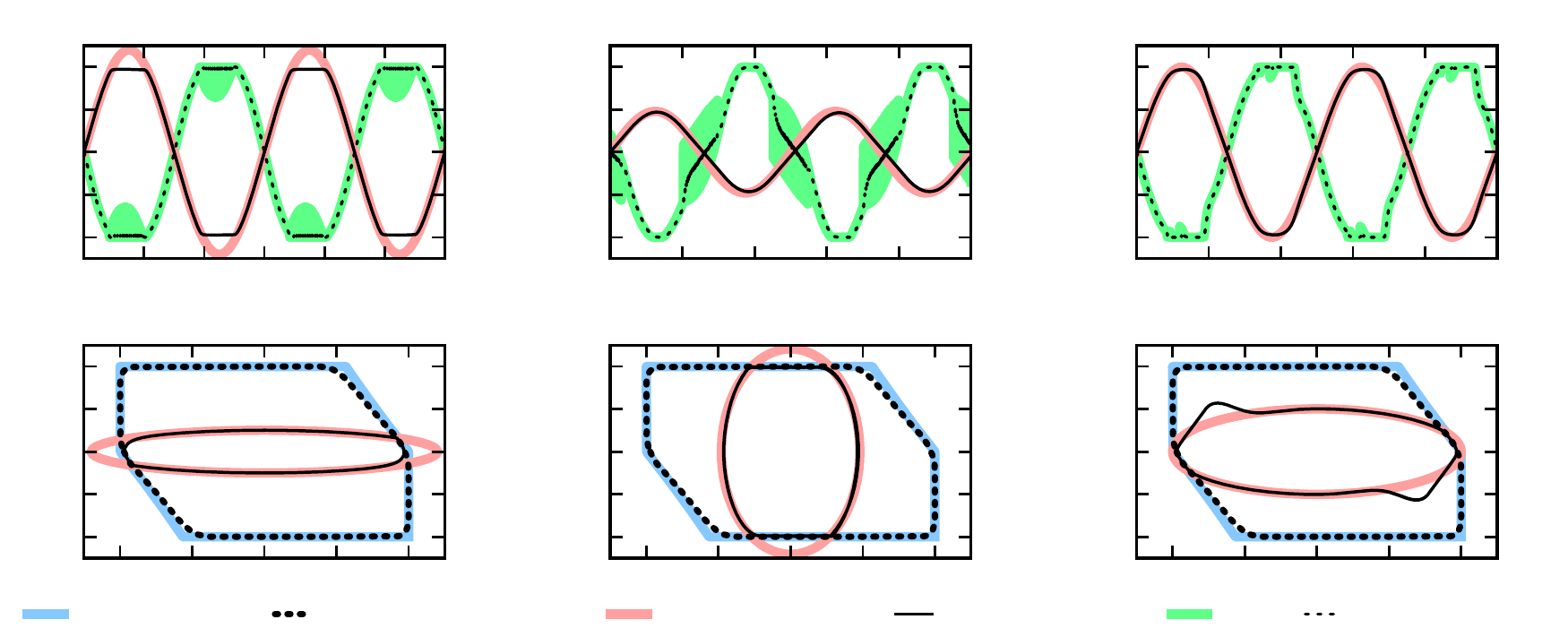
        }
    \caption{The inverted pendulum system protected by the safety controller, with a low priority reference tracking task for three references---one which exceeds position bounds (a,b), one which exceeds velocity bounds (c,d) and one which would just barely push this input limited unstable system past the point of no return (e,f). Each test shown both in the time domain (a,c,e), and in the phase space (b,d,f). Chattering input signal visualized raw and with low-pass filter. Input signal in low speed (a) and (e) examples dominated by gravity bias, hence behavior mostly opposite the position. 50 equilibrium point resolution. Signal tracking after leaving $\mathbf B(x)= \underline \epsilon$ implemented with a simple feedback linearizing controller.}
    \label{inverted pendulum plots}
\end{figure*}


We consider an inverted pendulum (Fig. \ref{fig:inverted_pendulum}, $m = 1 \si{kg}$, $l = 1.213 \si{m}$, $g=9.8\si{m/s/s}$), with safe region
\begin{align}
 \mathcal X &= \left\lbrace\begin{bmatrix} {\theta } \\ \dot{\theta } \end{bmatrix}\ : \ |\theta| \leq \theta_c = 1 \; \si{rad}, \ |\dot{\theta}| \leq 1 \; \si{rad /s} \right\rbrace ,\nonumber\\
\mathcal U &= \{\tau :\ |\tau| \leq \bar \tau = 10 \; \si{N \cdot m } \} ,
\end{align}
and dynamics $m l ^ {2} \ddot{\theta} = \tau + m g l \cdot \sin(\theta).$

Linearizing around equilibrium $\theta_e,\tau_e$, with a validity region\footnote{This validity region could potentially be iteratively tuned to match the extent of the ellipse, but such concerns are of lesser importance than verifying safety. Indeed, few systems will even have a readily available function for mapping these tuned regions to trustworthy models.} $|\theta-\theta_e|<\alpha = \min(0.25, \theta_c-|\theta_e|)$
\begin{small}
\begin{align*}
\begin{bmatrix} 
\dot {\theta }  \\
\ddot{\theta }    
\end{bmatrix} 
\in \mathbf{Co}&\left\lbrace
\begin{bmatrix} 
0 & 1 \\
\frac{g}{l} cos(\theta_{e}) \pm \bar\zeta & 0 
\end{bmatrix}
\begin{bmatrix} 
     \theta  - \theta_{e} \\
\dot{\theta }  - \dot{\theta }_{e}   
\end{bmatrix}
+
\begin{bmatrix} 
0 \\
\frac{1}{m l ^ {2}}
\end{bmatrix}
(\tau - \tau_{e})
\right\rbrace,\\ 
\forall \quad 
\begin{bmatrix} 
{\theta }  \\
\dot{\theta }    
\end{bmatrix}
&\in\left\lbrace
\begin{bmatrix} 
{\theta }  \\
\dot{\theta }    
\end{bmatrix}
\ : \ |\theta-\theta_e|\leq \alpha, \ |\dot{\theta}| \leq 1 \; \si{rad \cdot s ^ {-1}} \right\rbrace\subseteq \mathcal X,\nonumber \\
\tau&\in \mathcal U,
\end{align*}
\end{small}
where $\bar \zeta = \underset{\theta:|\theta-\theta_e|<\alpha}{\max}\zeta(\theta)$ represents a bound on linearization error,
\begin{equation}
\zeta(\theta) =  \frac{g}{l} \bigg[ \frac{sin (\theta) - sin (\theta_e)}{\theta - \theta_e} - cos(\theta_e) \bigg].
\end{equation}
While it is possible to analytically calculate this bound, it is also simple to compute via one dimensional brute force search.\footnote{This is not a conservative strategy in general, but it is an extremely accurate approximation relative to the numerical tolerances in the LMI subproblem.} 

This brings us to the LMI subproblems: we construct 50 barrier pairs to approximate the safe region using ellipsoids, and then combine them---forming a barrier pair with a  min-quadratic barrier function. Grid density is empirically tuned---checking for ellipsoid coverage does not scale well.

In the simulation (Fig.~\ref{inverted pendulum plots}), the inverted pendulum system is protected by the safety controller, with the safe system $\Sigma_s$ itself in a feedback configuration with a reference tracking controller. We demonstrate the behavior using three references. For a reference exceeding position bounds (Fig. \ref{inverted pendulum plots} a,b), the pendulum stops very close to the position bound and returns to tracking after the reference returns to $\mathcal X_0$. In the mean time, constraints are enforced by high speed switching. For a reference exceeding velocity bounds (Fig.~\ref{inverted pendulum plots} c,d), the pendulum stalls at the maximum allowable velocity. For a reference which would just barely push this input-limited unstable system past the point of no return (Fig.~\ref{inverted pendulum plots} e,f), the pendulum begins to rail the deceleration in advance of impact, and comes to a full stop in the safe region. When the reference returns to $\mathcal X_0$ it is moving relatively fast, and the reference-tracker has to exceed this speed to catch up. While this last-second deceleration behavior is not as perfect as is possible with second order systems, it is close---and this is encouraging for the higher order systems for which no equally simple policy exists.

The time plots (Fig. \ref{inverted pendulum plots} a,c,d) show the pendulum never violates any state or input limits during the tasks. By comparing the time plots between positions and inputs, the safety controller only applies $\mathbf k(x)$ when it is necessary. The chattering (fast switching) of the input happens because our $\bar \epsilon \approx \ubar \epsilon \approx 0$. When the pendulum is in the safety region, the performance of the reference tracking controller is preserved. 

\subsection{Double Spring-Mass}

\begin{figure}
\centering
\begin{tikzpicture}
\tikzstyle{spring}=[thick,decorate,decoration={zigzag,pre length=0.3cm,post length=0.3cm,segment length=6}]
\tikzstyle{damper}=[thick,decoration={markings,  
  mark connection node=dmp,
  mark=at position 0.5 with 
  {
    \node (dmp) [thick,inner sep=0pt,transform shape,rotate=-90,minimum width=15pt,minimum height=1pt,draw=none] {};
    \draw [thick] ($(dmp.north east)+(2pt,0)$) -- (dmp.south east) -- (dmp.south west) -- ($(dmp.north west)+(2pt,0)$);
    \draw [thick] ($(dmp.north)+(0,-5pt)$) -- ($(dmp.north)+(0,5pt)$);
  }
}, decorate]
\tikzstyle{ground}=[fill,pattern=north east lines,draw=none,minimum width=0.75cm,minimum height=0.15cm]

\begin{scope}[xshift=7cm]
\node [draw,outer sep=0pt,thick] (M1) [minimum width=1cm, minimum height=0.5cm] {$M_1$};

\node [draw,outer sep=0pt,thick] (M2) [minimum width=1cm, minimum height=0.5cm, xshift = 2.5cm] {$M_2$};

\node (ground1) [ground,anchor=north, yshift=-0.25cm, minimum width=1.5cm] at (M1.south) {};

\draw (ground1.north east) -- (ground1.north west);

\node (ground2) [ground,anchor=north, yshift=-0.25cm, minimum width=1.5cm] at (M2.south) {};

\draw (ground2.north east) -- (ground2.north west);

\draw [thick] (M1.south west) ++ (0.2cm,-0.125cm) circle (0.125cm)  (M1.south east) ++ (-0.2cm,-0.125cm) circle (0.125cm);

\draw [thick] (M2.south west) ++ (0.2cm,-0.125cm) circle (0.125cm)  (M2.south east) ++ (-0.2cm,-0.125cm) circle (0.125cm);

\node (wall) [ground, rotate=-90, minimum width = 1cm, yshift = - 2cm] {};

\draw (wall.north east) -- (wall.north west);


\draw [spring] ($(M1.north east)!(wall.90)!(M1.south east)$) -- ($(M2.north west)!(wall.90)!(M2.south west)$) node [midway,above] {$K$};


\draw [-latex,ultra thick] ($(M1.north east)$) ++ (0,0) -- +(.4cm,0) node [midway,above] {$u$};


 \draw[thick, dashed] ($(M1.north west)$) -- ($(M1.north west) + (0,.5)$);
 \draw[thick, dashed] ($(M2.north west)$) -- ($(M2.north west) + (0,.5)$);
 \draw[thick, -latex] ($(M2.north west) + (0,0.25)$) -- ($(M2.north west) + (0.5,0.25)$) node [midway, above] {$y_2$};
 \draw[thick, -latex] ($(M1.north west) + (0,0.25)$) -- ($(M1.north west) + (0.5,0.25)$) node [midway, above] {$y_1$};

\end{scope}
\end{tikzpicture}

\caption{A conceptual series elastic actuator model.} \label{double mass-spring}
\end{figure}
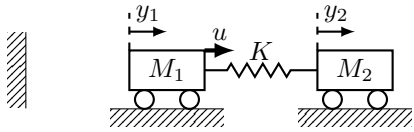

\begin{figure}
    \centering
    \scalebox{.85}{
    	\def\svgwidth{.5\textwidth}
    	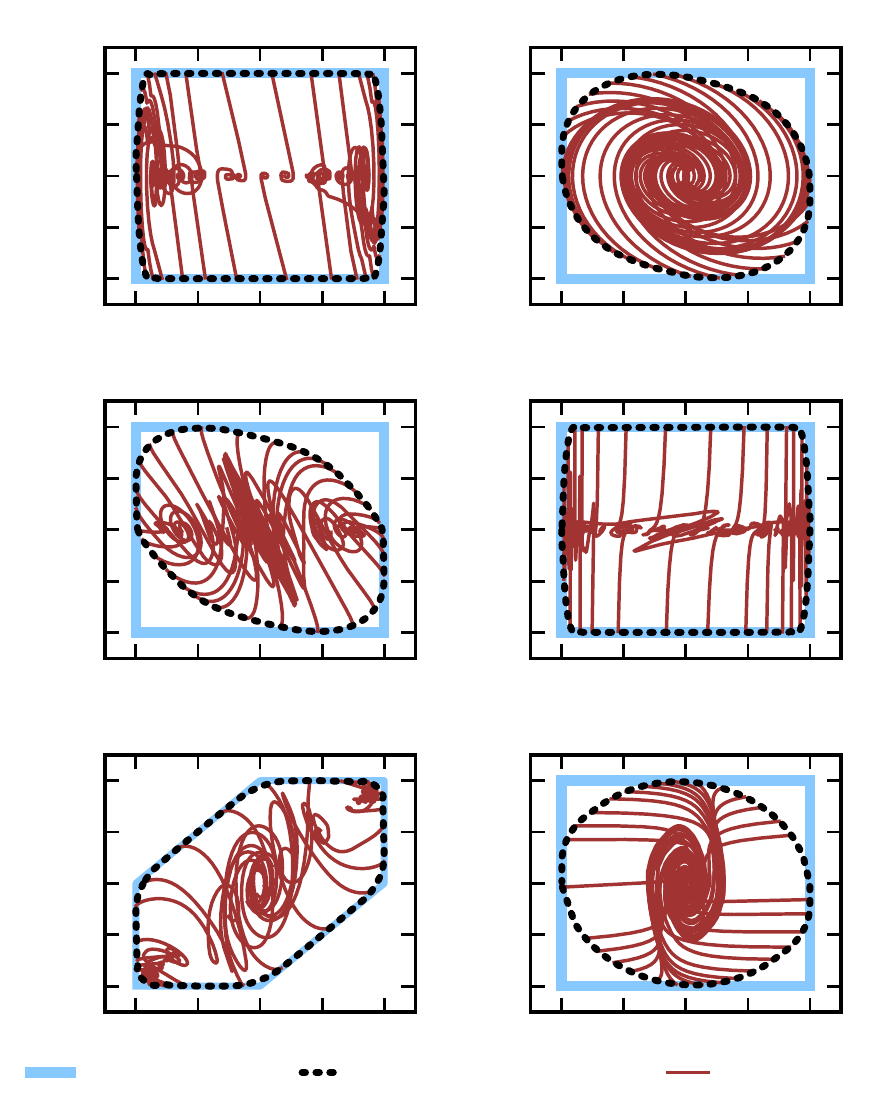
        }
    \caption{Visualizing the multiple equilibriums of $\mathbf k$ in the series elastic actuator example. Simulated trajectories begin on the extreme-projected edge of $\mathcal X_0$ for each plot, and are computed separately to demonstrate the invariance of $\mathbf B(x)<0$ when $u=\mathbf k(x)$.}
    \label{fig:manyequilibriums}
\end{figure}


A series elastic actuator can be conceptually modeled as a dual spring-mass system (Fig \ref{double mass-spring}) with $M_1$ as the motor, $M_2$ as the output inertia, and $u$ as the motor effort ($M_1=M_2 =1$, $K=1$). The safe constraints includes position limits, velocity limits, motor effort limits, and spring deflection limits:
\begin{align}
& \mathcal X = \{({y}_{1}, \, \dot{y}_{1}, \, {y}_{2}, \, \dot{y}_{2} ) ^ {T} : && \ |y_1 - y_2| \leq 1, \\
& && |y_i| \leq 1, \ |\dot y_i| \leq 1, \ i=1,2 \} ,\nonumber\\
& \mathcal U = \{u :\ |u| \leq 10 \} .\nonumber
\end{align} 

A linear state equation 
\begin{align}
M_1 \ddot y_1 &= K(y_2-y_1) + u\\
M_2 \ddot y_2 &= K(y_1-y_2)
\end{align}

is valid in any point of the safe region. We use 50 barrier pairs to approximate the safe state space region. 

30 trajectories are simulated on each of the 6 2D projections (Fig. \ref{fig:manyequilibriums}) of the state space. These trajectories start from the edge of the min-quadratic barrier and converge to one of its 50 minima, and these projections offer a glimpse into the approximation performance of our strategy in this four dimensional space---which notably allows very tight adherence to the position limits in input, output, and spring deflection states.

\section{Discussion}
This paper presents a synthesis method for controllers that guarantee future satisfaction of all state constraints, subject to input-limited dynamics, by taking advantage of the guarantees available through LMI-based controller synthesis using trusted LDI models. We introduced the concept of barrier pairs---which make it easier to reason about the satisfaction of input limits---and a min-quadratic barrier in particular, as a simple means of combining the results of many LMI-synthesis problems. And we distinguish our work by addressing input constraints, and by synthesizing the barrier function and the controller together---so that the controller choices, which can significantly alter the shape of $B(x)$, are used to maximize its volume. In the presence of input constraints, this controller synthesis sub-problem will choose the invariant ellipse to avoid the critical points and critical-point-terminating trajectories which represent dynamic limitations due to input limits.




\bibliographystyle{IEEEtran}
\bibliography{main}
\end{document}